\makeatletter \@namedef{subjclassname@2010}{
  \textup{2010} Mathematics Subject Classification}
\newtheorem{thm}{Theorem}[section]
\newtheorem{lem}[thm]{Lemma}
\newtheorem{pro}[thm]{Proposition}
\newtheorem{conj}[thm]{Conjecture}
\theoremstyle{remark}
\newtheorem*{rema}{Remark}
\theoremstyle{definition}
\newcommand{\ran}{\text{\rm{ran}}}
\newcommand{\R}{\mathbb{R}}
\newcommand{\N}{\mathbb{N}}
\begin{document}

\title[Triviality of domains of powers of operators]{On the triviality of domains of powers and adjoints of closed operators}
\author[M. H. MORTAD]{Mohammed Hichem Mortad}

\dedicatory{}
\thanks{}
\date{}
\keywords{Closed operators. Trivial domain}

\subjclass[2010]{Primary 47A05}

 \address{Department of
Mathematics, University of Oran 1, Ahmed Ben Bella, B.P. 1524, El
Menouar, Oran 31000, Algeria.\newline {\bf Mailing address}:
\newline Pr Mohammed Hichem Mortad \newline BP 7085 Seddikia Oran
\newline 31013 \newline Algeria}

\email{mhmortad@gmail.com, mortad.hichem@univ-oran1.dz.}

\begin{abstract}
The paper is devoted to counterexamples involving the triviality of
domains of products and/or adjoints of densely defined operators.
\end{abstract}

\maketitle

\section{Introduction}

Counterexamples about non necessarily bounded operators have not
stopped to impress us. The striking example due to Chernoff is well
known to specialists. It states that there is a closed, unbounded,
densely defined, symmetric and semi-bounded operator $A$ such that
$D(A^2)=\{0\}$ (see \cite{CH}). This counterexample came in to
simplify a rather complicated construction already obtained by
Naimark in \cite{NAI}. It is worth noticing that Schm\"{u}dgen
\cite{SCHMUDG-1983-An-trivial-domain} obtained almost simultaneously
(as Chernoff) that every unbounded self-adjoint $T$ has two closed
and symmetric restrictions $A$ and $B$ such that
\[D(A)\cap D(B)=\{0\}\text{ and } D(A^2)=D(B^2)=\{0\}.\]

This fascinating result by Schm\"{u}dgen (which was later
generalized by Brasche-Neidhardt in \cite{Brasche-Neidhardt}. See
also \cite{Arlinski-Zagrebnov}) also dealt with higher powers.

Recently, the author obtained (jointly with S. Dehimi) in
\cite{Dehimi-Mortad-Chernoff} a fairly simple example based upon
matrices of unbounded operators. Based on results from
\cite{Dehimi-Mortad-Chernoff}, we propose the following conjecture:

\begin{conj}\label{Conjecture main}
For each $n\in\N$, there is a closed and densely defined $T$ such
that $D(T^{n-1})\neq\{0\}$ and $D[T^{*(n-1)}]\neq\{0\}$ yet
\[D(T^n)=D(T^{*n})=\{0\}.\]
\end{conj}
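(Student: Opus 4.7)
The plan is to generalize the matrix-of-unbounded-operators approach from \cite{Dehimi-Mortad-Chernoff}. First I would seek, for each $n\geq 2$, a closed densely defined operator $A$ on a Hilbert space $H$ with $D(A^{n-1})\neq\{0\}$ and $D(A^n)=\{0\}$; the results of Schm\"udgen \cite{SCHMUDG-1983-An-trivial-domain} and of Brasche-Neidhardt \cite{Brasche-Neidhardt} supply such an $A$ (and it can be taken symmetric). The operator $T$ witnessing the conjecture would be built on the $n$-fold direct sum $H^n$ as an upper triangular block matrix with $A$ on the diagonal and the identity on the superdiagonal, namely
\[
T \;=\; \begin{pmatrix}
A & I & 0 & \cdots & 0 \\
0 & A & I & \cdots & 0 \\
\vdots & & \ddots & \ddots & \vdots \\
0 & 0 & \cdots & A & I \\
0 & 0 & \cdots & 0 & A
\end{pmatrix},
\]
with natural initial domain $D(A)\oplus\cdots\oplus D(A)$, then closed.

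Next I would carry out an iterative computation of $T^k$. Applied to $(x_1,\dots,x_n)$, the $k$-th power produces combinations of $A^k x_j$ together with lower-order terms; I expect the condition $(x_1,\dots,x_n)\in D(T^n)$ to force each coordinate into $D(A^n)=\{0\}$, while $D(T^{n-1})$ remains nontrivial (the last coordinate survives one fewer application of $A$). In parallel I would determine $T^*$: the formal candidate is the ``transposed'' lower triangular matrix with $A^*$ on the diagonal and $I$ on the subdiagonal, and a careful verification---needed because matrix-adjoint identities are subtle for unbounded entries---should identify $D(T^*)$ precisely.

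The hard part will be the adjoint side. A symmetric $A$ with $D(A^n)=\{0\}$ does not automatically satisfy $D(A^{*n})=\{0\}$, because $A\subsetneq A^*$ and the higher-power domain of the adjoint can be much larger; a naive triangular construction inherits this asymmetry and yields a nontrivial $D(T^{*n})$. Overcoming this seems to require either (i) refining the Schm\"udgen/Brasche-Neidhardt construction to produce an $A$ with both $D(A^n)=\{0\}$ and $D(A^{*n})=\{0\}$, or (ii) modifying the block structure to mix $A$ with $A^*$---for instance by placing $A$ in some entries and $A^*$ in others, or by using off-diagonal terms that swap these roles---so that the adjoint computation feeds back into $D(A^n)$ instead of $D(A^{*n})$. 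Controlling $T^n$ and $T^{*n}$ simultaneously, without collapsing $D(T^{n-1})$ or $D(T^{*(n-1)})$ in the process, is the essential difficulty and is presumably why the statement appears as a conjecture rather than as a theorem.
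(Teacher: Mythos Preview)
First, note that the statement is labeled a \emph{conjecture} and the paper does not prove it in full generality; the Conclusion section says so explicitly. The paper only establishes the cases $n=3$, $n=4$, $n=6$, and $n=2^k$, each by a separate ad hoc construction. So there is no ``paper's proof'' to compare against, only partial results.

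Your proposed construction has a genuine gap: it is circular. With $T$ the $n\times n$ upper-triangular block matrix having $A$ on the diagonal and $I$ on the superdiagonal, a short inductive computation (working from the last coordinate upward) gives $D(T^k)=D(A^k)^n$ for every $k\geq 1$, and the adjoint you identify---lower triangular with $A^*$ on the diagonal and $I$ on the subdiagonal---satisfies $D(T^{*k})=D(A^{*k})^n$ by the same argument. Hence $T$ witnesses the conjecture for the exponent $n$ if and only if $A$ already does. The block matrix adds nothing; the entire burden falls back on producing a single operator $A$ with $D(A^{n-1})\neq\{0\}=D(A^n)$ and $D(A^{*(n-1)})\neq\{0\}=D(A^{*n})$, which is precisely the conjecture you set out to prove. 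Your option (i) at the end acknowledges this, but that means the triangular scheme is not a reduction at all.

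By contrast, the paper's mechanism is the \emph{off-diagonal} $2\times 2$ block $T=\begin{pmatrix}0&C\\ D&0\end{pmatrix}$ with $C$, $D$ self-adjoint. The point of this shape is that $T^*=\begin{pmatrix}0&D\\ C&0\end{pmatrix}$ simply swaps the blocks, so $T^{2m}=\begin{pmatrix}(CD)^m&0\\0&(DC)^m\end{pmatrix}$ and $T^{*2m}$ is the same with $CD$ and $DC$ interchanged. Both $D(T^n)$ and $D(T^{*n})$ are thus controlled by the \emph{same} pair of products $CD$, $DC$, and no separate adjoint analysis is needed. Feeding in a previously constructed $S=CD$ with $D(S^m)=D(S^{*m})=\{0\}$ then doubles the admissible exponent, which is exactly how the paper gets the $2^k$ case by ``nesting''. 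If you want to push toward the general $n$, the off-diagonal template---possibly on larger cyclic-permutation-type matrices rather than the Jordan-block shape you chose---is the more promising starting point, because it is the structure that makes the $T$ and $T^*$ computations collapse to the same ingredients.
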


It is worth emphasizing that even though K. Schm\"{u}dgen obtained
the general case of $n$ powers, here we give more explicit
counterexamples and the novelty is that the counterexamples in our
case concern both the powers of an operator as well as the powers of
their adjoints.

The main aim of this paper is to try to give answers to the previous
conjecture. It is just amazing how matrices of unbounded operators,
despite their unexpected behavior in some cases, can make things
fairly easy to deal with.  The same approach has equally allowed us
to find more interesting counterexamples on a different topic. See
\cite{Mortad-commutators-unboudned-CEXAMP}.

In the end, we assume readers are familiar with notions and results
on unbounded operators and, in particular, matrices of unbounded
operators. We refer readers to \cite{tretetr-book-BLOCK} for
properties of block operator matrices. From some recent papers on
matrices of unbounded operators, we cite
\cite{JIN-CHEN-MATrix-UNBOUNDED-ARXIV},
\cite{Moller-Szafanriac-matri-unbounded} and
\cite{Ota-Schmudgen-2003-Matrices-Operators}. For the general theory
of unbounded operators, readers may wish to consult
\cite{SCHMUDG-book-2012} or \cite{Weidmann}.

\section{Main Counterexamples}

We start with an auxiliary example which is also interesting in its
own.

\begin{pro}\label{AB trivial domain inspired KOSAKI QUES}
There is an unbounded self-adjoint and positive operator $A$ and an
everywhere defined bounded and self-adjoint $B$ such that
$D(AB)=\{0\}$ and $D(BA)\neq\{0\}$ (in fact $D(BA)=D(A)$ is dense).
\end{pro}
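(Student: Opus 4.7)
The plan is to build $A$ and $B$ as $2\times 2$ block operator matrices on $H := H_0 \oplus H_0$, exploiting the flexibility emphasised in the introduction. Note at the outset that since $B$ will be everywhere defined and bounded, the equality $D(BA) = D(A)$ holds automatically and $D(A)$ is dense by self-adjointness; so the second claim of the proposition comes for free. The entire content is to arrange $D(AB) = \{0\}$, and a one-line observation reduces this to $B$ being injective with $\operatorname{ran}(B) \cap D(A) = \{0\}$.

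Pick any unbounded self-adjoint positive operator $A_0$ on a separable Hilbert space $H_0$ (for concreteness $A_0 e_n = n e_n$ on $\ell^2$), and set
\[
A := \begin{pmatrix} A_0 & 0 \\ 0 & A_0 \end{pmatrix}, \qquad D(A) := D(A_0) \oplus D(A_0),
\]
an unbounded self-adjoint positive operator with dense domain. For $B$ I propose the off-diagonal shape
\[
B := \begin{pmatrix} 0 & V \\ V^* & 0 \end{pmatrix}
\]
with $V$ a bounded linear operator on $H_0$ yet to be chosen; this $B$ is automatically bounded, self-adjoint, and everywhere defined. Since $B(x,y) = (Vy, V^*x)$, one has $B(x,y) \in D(A)$ iff $Vy \in D(A_0)$ and $V^*x \in D(A_0)$. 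Hence $D(AB) = \{0\}$ will follow provided $V$ and $V^*$ are injective and both $\operatorname{ran}(V)$ and $\operatorname{ran}(V^*)$ meet $D(A_0)$ only at $0$.

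The main obstacle is producing such a $V$. Taking $V = V^*$ self-adjoint collapses this to the single requirement of a bounded injective self-adjoint operator on $H_0$ whose dense range meets $D(A_0)$ only at zero. Since $\operatorname{ran}(V)$ and $D(A_0)$ are both dense proper subspaces of the separable Hilbert space $H_0$, a trivial intersection is consistent in principle. A concrete recipe is to realise $A_0$ as multiplication by a positive unbounded function on an $L^2$ space via the spectral theorem, then build $V$ as a compact positive diagonal operator in a second orthonormal basis obtained by a sufficiently generic unitary rotation, so that fast coefficient decay of $y$ in the $V$-eigenbasis is incompatible with the weight integrability defining $D(A_0)$ unless $y=0$. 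I expect this intersection verification --- producing a dense operator range that genuinely meets a prescribed dense self-adjoint-operator domain only at $0$ --- to be the technical crux.
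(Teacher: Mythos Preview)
Your reduction is correct, but it contains an unnecessary detour and, more importantly, leaves the essential step undone.

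First, the $2\times 2$ block structure buys you nothing. Once you have a bounded self-adjoint injective $V$ on $H_0$ with $\operatorname{ran}(V)\cap D(A_0)=\{0\}$, you can simply take $A:=A_0$ and $B:=V$ on $H_0$ itself: then $D(AB)=\{x:Vx\in D(A_0)\}=\{0\}$ by injectivity of $V$, and $D(BA)=D(A_0)$ is dense. So your matrix scaffolding collapses to the single-space problem you already identified as the ``technical crux''.

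Second, and this is the real gap: you have not produced such a $V$. Saying that a ``sufficiently generic unitary rotation'' of a compact diagonal operator should work is not a construction; verifying that a specific dense operator range meets a specific self-adjoint domain only at $0$ is exactly the content of the proposition, and your proposal postpones it rather than supplies it. Nothing in your outline guarantees the incompatibility you describe, and for your concrete choice $A_0e_n=ne_n$ you give no candidate $V$ at all.

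The paper resolves this cleanly and explicitly on $L^2(\mathbb R)$: take the unbounded positive self-adjoint operator to be $\mathcal F^{*}M_{e^{x^2/2}}\mathcal F$ and the bounded positive self-adjoint operator to be $M_{e^{-x^2/2}}$. The latter is injective with range exactly $D(M_{e^{x^2/2}})$, and the key intersection fact
\[
D\bigl(M_{e^{x^2/2}}\bigr)\cap D\bigl(\mathcal F^{*}M_{e^{x^2/2}}\mathcal F\bigr)=\{0\}
\]
is a known result of Kosaki. This is precisely the missing ingredient in your scheme: an explicit pair realising the trivial intersection, rather than an appeal to genericity.
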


\begin{proof}In fact, we have a slightly better counterexample than what
is suggested. Consider the operators $A$ and $B$ defined by
\[Af(x)=e^{\frac{x^2}{2}}f(x)\]
on $D(A)=\{f\in L^2(\R):~e^{\frac{x^2}{2}}f\in L^2(\R)\}$ and
$B:=\mathcal{F}^*A\mathcal{F}$ where $\mathcal{F}$ designates the
usual $L^2(\R)$-Fourier Transform. Then $D(A)\cap D(B)=\{0\}$ (see
e.g. \cite{KOS}). Clearly $A$ is boundedly invertible and
\[A^{-1}f(x)=e^{\frac{-x^2}{2}}f(x)\]
is defined from $L^2(\R)$ onto $D(A)$.

Recall that $D(BA^{-1})$ is trivial if $D(B)\cap \ran(A^{-1})$ is so
and if $A^{-1}$ is one-to-one. That $A^{-1}$ is injective is plain.
Now,
\[D(B)\cap \ran(A^{-1})=D(B)\cap D(A)=\{0\}\] and this
is already available to us.  In the end,
\[D(A^{-1}B)=D(B)\]
which is evidently dense in $L^2(\R)$.
\end{proof}

\begin{pro}
There exists a densely defined operator $T$ such that
\[D(T^*)=D(T^2)=D(TT^*)=D(T^*T)=\{0\}.\]
\end{pro}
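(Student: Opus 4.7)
My plan is first to collapse the four triviality conditions into two. Since $D(TT^*)\subset D(T^*)$, the condition $D(T^*)=\{0\}$ automatically yields $D(TT^*)=\{0\}$. Moreover, when $D(T^*)=\{0\}$,
\[D(T^*T)=\{x\in D(T):Tx\in D(T^*)\}=\{x\in D(T):Tx=0\}=\ker T.\]
So it suffices to exhibit a densely defined, \emph{injective} operator $T$ with $D(T^*)=\{0\}$ and $D(T^2)=\{0\}$.

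Recall that $D(T^*)=\{0\}$ is equivalent to the graph $G(T)$ being dense in $H\oplus H$, so $T$ must be strongly non-closable. I would build such a $T$ directly on $H=L^2(\R)$ as follows. Fix an orthonormal basis $(e_n)_{n\ge 1}$ and denote by $E:=\mathrm{span}_{\C}\{e_n:n\ge 1\}$ its algebraic span. I would inductively pick a sequence $(d_n)$ of unit vectors in $H$ that is dense in the unit sphere and such that $\{d_n\}$ is linearly independent modulo $E$. Such a choice is available by a Baire argument: at each stage the forbidden set $E+\mathrm{span}\{d_1,\dots,d_{n-1}\}$ is a countable union of finite-dimensional subspaces of $H$, hence meager, so one can always find a unit vector $d_n$ approximating any prescribed target and missing this set. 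I would then set $D(T):=E$ and $Te_n:=n\,d_n$, extended linearly.

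With these choices, $T$ is densely defined and injective. For $y\in H$ and $x=\sum c_n e_n\in E$, $\langle Tx,y\rangle=\sum_n c_n\,n\,\langle d_n,y\rangle$, so $y\in D(T^*)$ iff $\sum_n n^2|\langle d_n,y\rangle|^2<\infty$. But density of $(d_n)$ on the unit sphere forces $|\langle d_n,y\rangle|\ge\|y\|/2$ for infinitely many $n$ whenever $y\ne 0$, making the sum diverge; hence $D(T^*)=\{0\}$. For $D(T^2)$, any $Tx$ is a finite linear combination of the $d_n$, and the independence-modulo-$E$ condition forces $Tx\in D(T)=E$ only if $Tx=0$, i.e.\ $x\in\ker T=\{0\}$. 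Thus $D(T^2)=\{0\}$.

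The hardest step in this approach is the Baire-style inductive construction of $(d_n)$ meeting the density, normalisation, and independence-modulo-$E$ constraints simultaneously. An alternative route more in the spirit of the paper would be to form $T$ as a block-matrix operator on $L^2(\R)\oplus L^2(\R)$ built from $A$ and $B$ of Proposition~\ref{AB trivial domain inspired KOSAKI QUES}, using $D(AB)=\{0\}$ to force $D(T^2)=\{0\}$; the obstruction on that route is that self-adjoint block entries tend to produce $T^*$ with a dense domain, so a further non-self-adjoint perturbation would still be needed to secure $D(T^*)=\{0\}$.
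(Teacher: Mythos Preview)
Your argument is correct. The reduction to finding an injective densely defined $T$ with $D(T^*)=\{0\}$ and $D(T^2)=\{0\}$ is exactly what the paper does, and your inductive Baire-type construction of $(d_n)$ does produce such a $T$: linear independence of the $d_n$ modulo $E$ gives both injectivity and $D(T^2)=\{0\}$, and density of $(d_n)$ on the sphere forces $D(T^*)=\{0\}$ as you explain.

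The paper's construction, however, is not a block matrix at all, so your closing remark anticipates the wrong route. It simply takes $T:=A^{-1}B$ on $L^2(\R)$, with $A$ and $B$ the self-adjoint operators from Proposition~\ref{AB trivial domain inspired KOSAKI QUES}. Since $A^{-1}$ is bounded self-adjoint and $B$ is self-adjoint, $(A^{-1}B)^*=BA^{-1}$, whose domain is $\{0\}$ by that proposition; injectivity of $B$ and of $A^{-1}B$ then dispatch $D(T^2)$ and $D(T^*T)$ in one line each. Thus the paper gives a short, fully explicit example built from Fourier-conjugate multiplication operators, while yours is a self-contained existence argument valid on any separable Hilbert space without analytic input. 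Theirs is more concrete and economical; yours shows the phenomenon is generic rather than tied to a particular pair $A,B$.
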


\begin{proof}There are known examples in the literature about the case $D(T^*)=\{0\}$. For
instance, Example 3.4 on Page 105 in
\cite{Jorgensen-Tian-Book-non-comm-analysis} or Example 3 on Page 69
in \cite{Weidmann}. These examples are not that straightforward. The
example we are about to give is truly simple.

Consider the operators $A$ and $B$ introduced in Proposition \ref{AB
trivial domain inspired KOSAKI QUES}, that is,
\[Af(x)=e^{\frac{x^2}{2}}f(x)\]
on $D(A)=\{f\in L^2(\R):~e^{\frac{x^2}{2}}f\in L^2(\R)\}$ and
$B:=\mathcal{F}^*A\mathcal{F}$. We then found that
$D(BA^{-1})=\{0\}$.

Now, set $T:=A^{-1}B$. Then $T$ is densely defined because
$D(T)=D(B)$ as also $A^{-1}\in B(L^2(\R))$. Thus,
\[D(T^*)=D[(A^{-1}B)^*]=D(BA^{-1})=\{0\},\]
as needed. Hence plainly
\[D(TT^*)=\{0\}.\]
Now,
\[D(T^*T)=\{f\in D(T):Tf\in D(T^*)\}=\{f\in D(A^{-1}B):A^{-1}Bf=0\}=\{0\}\]
for $A^{-1}B$ is one-to-one. Finally,
\[D(T^2)=D(A^{-1}BA^{-1}B)=D[(BA^{-1})B]=\{f\in D(B):Bf=0\}\]
and so $D(T^2)=\{0\}$ by the injectivity of $B$.
\end{proof}

\begin{pro}
There is a densely defined and closed operator $T$ such that
$D(T^2)\neq\{0\}$ and $D(T^{*2})\neq\{0\}$ but
\[D(T^3)=D(T^{*3})=\{0\}.\]
\end{pro}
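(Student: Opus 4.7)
The plan is to form a $2 \times 2$ block operator on $L^2(\R) \oplus L^2(\R)$ using the very operators $A$ and $B$ from Proposition \ref{AB trivial domain inspired KOSAKI QUES}. Recalling that $A$ and $B$ are positive self-adjoint, that $A^{-1}$ is an everywhere-defined injective bounded operator with $\ran(A^{-1}) = D(A)$, that $D(A) \cap D(B) = \{0\}$, and that $D(BA^{-1}) = \{0\}$ while $D(A^{-1}B) = D(B)$ is dense, I would set
\[
T := \begin{pmatrix} 0 & A^{-1} \\ B & 0 \end{pmatrix}, \qquad D(T) = D(B) \oplus L^2(\R).
\]
Then $D(T)$ is dense, and $T$ is closed: if $(f_n, g_n) \to (f,g)$ in $D(T)$ with $T(f_n, g_n) \to (h, k)$, then boundedness of $A^{-1}$ forces $h = A^{-1}g$, while closedness of $B$ forces $f \in D(B)$ and $Bf = k$.

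Next I would compute the powers formally as
\[
T^2 = \begin{pmatrix} A^{-1}B & 0 \\ 0 & BA^{-1} \end{pmatrix}, \qquad T^3 = \begin{pmatrix} 0 & A^{-1}BA^{-1} \\ BA^{-1}B & 0 \end{pmatrix},
\]
and read off the exact domains. One has
\[
D(T^2) = D(A^{-1}B) \oplus D(BA^{-1}) = D(B) \oplus \{0\},
\]
which is manifestly nontrivial (even dense in $L^2(\R) \oplus \{0\}$). For $D(T^3) = D(A^{-1}BA^{-1}) \oplus D(BA^{-1}B)$, I would argue both summands are trivial: if $f \in D(BA^{-1}B)$ then $A^{-1}Bf \in D(B)$, but $A^{-1}Bf \in \ran(A^{-1}) = D(A)$ automatically, so $A^{-1}Bf \in D(A) \cap D(B) = \{0\}$, whence the injectivity of $A^{-1}$ and of $B$ in turn forces $f = 0$. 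The argument for $D(A^{-1}BA^{-1}) = \{0\}$ is identical in spirit.

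For the adjoint, because $A^{-1}$ is everywhere defined and bounded while $B$ is self-adjoint, standard block-adjoint results (see \cite{tretetr-book-BLOCK}) give
\[
T^* = \begin{pmatrix} 0 & B \\ A^{-1} & 0 \end{pmatrix},
\]
which has exactly the structure of $T$ with the two off-diagonal entries interchanged. Repeating the previous domain tracing verbatim then yields $D(T^{*2}) = \{0\} \oplus D(B) \neq \{0\}$ and $D(T^{*3}) = \{0\}$. The step I expect to be most delicate is justifying that the formal identities for $T^n$ and for $T^*$ are genuine equalities of operators rather than mere inclusions; here the boundedness of $A^{-1}$ is decisive, since it lets one decompose $T$ as an everywhere-defined bounded matrix plus a closed matrix concentrated on a single entry, so that adjoints and compositions behave as they would in the bounded setting.
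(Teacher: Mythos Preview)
Your proposal is correct and is essentially the paper's own argument with the roles relabelled: the paper takes $T=\begin{pmatrix}0&A\\B&0\end{pmatrix}$ with $A$ unbounded, $B$ bounded, and $D(AB)=\{0\}$, while your $T=\begin{pmatrix}0&A^{-1}\\B&0\end{pmatrix}$ is exactly this construction after the identification (paper's $A$, paper's $B$) $\leftrightarrow$ (your $B$, your $A^{-1}$); indeed your $T$ is the paper's $T^*$. The domain computations and the use of injectivity match, with your appeal to $D(A)\cap D(B)=\{0\}$ being equivalent to the paper's use of $D(AB)=\{0\}$; the only slip is the harmless swap in writing $D(T^3)=D(A^{-1}BA^{-1})\oplus D(BA^{-1}B)$ instead of $D(BA^{-1}B)\oplus D(A^{-1}BA^{-1})$.
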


\begin{proof}
Let $A$ and $B$ be self-adjoint operators such that
$D(AB)=\{0_{L^2(\R)}\}$ but $D(BA)\neq\{0_{L^2(\R)}\}$ as in
Proposition \ref{AB trivial domain inspired KOSAKI QUES}. Remember
that there $A$ is one-to-one. Now, on $L^2(\R)\oplus L^2(\R)$, set
\[T=\left(
      \begin{array}{cc}
        0 & A \\
        B & 0 \\
      \end{array}
    \right).
\]
Hence
\[T^2=\left(
      \begin{array}{cc}
        0 & A \\
        B & 0 \\
      \end{array}
    \right)\left(
      \begin{array}{cc}
        0 & A \\
        B & 0 \\
      \end{array}
    \right)=\left(
      \begin{array}{cc}
        AB & 0 \\
        0 & BA \\
      \end{array}
    \right)\]
    and so $D(T^2)=\{0_{L^2(\R)}\}\oplus D(BA)\neq
    \{(0_{L^2(\R)},0_{L^2(\R)})\}$. Finally,
    \[T^3=\left(
      \begin{array}{cc}
        AB & 0 \\
        0 & BA \\
      \end{array}
    \right)\left(
      \begin{array}{cc}
        0 & A \\
        B & 0 \\
      \end{array}
    \right)=\left(
      \begin{array}{cc}
        0 & ABA \\
        BAB & 0 \\
      \end{array}
    \right).\]
    Obviously, $D(BAB)=\{0_{L^2(\R)}\}$. Since
    \[D(ABA)=\{x\in D(A):Ax\in D(AB)=\{0_{L^2(\R)}\}\}=\ker A\]
    and $A$ is injective, it follows that we equally have
    $D(ABA)=\{0_{L^2(\R)}\}$. Accordingly,
    $D(T^3)=\{(0_{L^2(\R)},0_{L^2(\R)})\}$. Finally, as
    \[T^*=\left(
      \begin{array}{cc}
        0 & B \\
        A & 0 \\
      \end{array}
    \right),\]
    then we may similarly show that $D(T^{*2})\neq\{0\}$
    and  $D(T^{*3})=\{0\}$, marking the end of the proof.
\end{proof}

\begin{pro}\label{nnnnnnnn}
There exists a densely defined and closed operator $T$ such that
$D(T^3)\neq\{0\}$ and $D(T^{*3})\neq\{0\}$ yet
\[D(T^4)=D(T^{*4})=\{0\}.\]
\end{pro}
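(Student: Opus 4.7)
The plan is to extend the $2 \times 2$ block-matrix construction of the previous proposition to a $3 \times 3$ one, using the operators $A$ and $B$ supplied by Proposition \ref{AB trivial domain inspired KOSAKI QUES}: $A$ is unbounded, positive, self-adjoint and boundedly invertible (hence injective), while $B$ is bounded everywhere defined self-adjoint, with $D(AB) = \{0\}$ and $D(BA) = D(A)$ dense. On $L^2(\R) \oplus L^2(\R) \oplus L^2(\R)$ I would set
\[
T := \begin{pmatrix} 0 & A & 0 \\ 0 & 0 & A \\ B & 0 & 0 \end{pmatrix},
\]
with natural domain $L^2(\R) \oplus D(A) \oplus D(A)$. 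Density is clear from density of $D(A)$, and closedness follows from a standard block-matrix criterion, since $B$ is bounded everywhere defined while $A$ is closed.

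The bulk of the proof is a mechanical domain chase. Writing $f = (f_1, f_2, f_3)$, one computes
\[
T^2 f = (A^2 f_3,\, A B f_1,\, B A f_2),
\]
so $f \in D(T^2)$ forces $f_1 \in D(AB) = \{0\}$ and $f_3 \in D(A^2)$, yielding $D(T^2) = \{0\} \oplus D(A) \oplus D(A^2)$. With $f_1 = 0$ one then gets
\[
T^3 f = (0,\, ABAf_2,\, BA^2 f_3),
\]
and $ABAf_2$ is defined iff $Af_2 \in D(AB) = \{0\}$, which forces $f_2 = 0$ by injectivity of $A$. Thus
\[
D(T^3) = \{0\} \oplus \{0\} \oplus D(A^2),
\]
which is dense, and in particular non-trivial. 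One more application gives $T^4 f = (0,\, ABA^2 f_3,\, 0)$, and the condition $A^2 f_3 \in D(AB) = \{0\}$ combined with injectivity of $A$ shows $f_3 = 0$, so $D(T^4) = \{0\}$.

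For the adjoint, I would first verify
\[
T^* = \begin{pmatrix} 0 & 0 & B \\ A & 0 & 0 \\ 0 & A & 0 \end{pmatrix};
\]
the inclusion $\supseteq$ is immediate from $A = A^*$ and $B = B^*$, while the reverse inclusion follows from the usual block-matrix identification, which in this setup is afforded by the boundedness and everywhere-definedness of $B$ together with the self-adjointness of $A$. An entirely parallel layer-by-layer calculation then yields $D(T^{*3}) = D(A^2) \oplus \{0\} \oplus \{0\}$, which is dense, and $D(T^{*4}) = \{0\}$.

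The main obstacle is not the domain chase itself, which is routine, but the justification that $T^*$ coincides with its formal transpose-adjoint matrix; once that is in hand, everything reduces to systematic use of the two facts $D(AB) = \{0\}$ and $A$ injective, both guaranteed by Proposition \ref{AB trivial domain inspired KOSAKI QUES}.
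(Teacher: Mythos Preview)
Your proof is correct and takes a genuinely different route from the paper's. The paper does not use a $3\times 3$ cyclic matrix; instead it invokes Lemma~\ref{mmm} (a pair of \emph{unbounded} self-adjoint operators $A,B$ with $D(A^{-1}B)=D(BA^{-1})=\{0\}$), builds the $2\times2$ operator $S=\begin{pmatrix}0&A^{-1}\\B&0\end{pmatrix}=CD$ with $C=\operatorname{diag}(A^{-1},B)$ and $D$ the swap, and then \emph{nests} once more by setting $T=\begin{pmatrix}\mathbf{0}&C\\D&\mathbf{0}\end{pmatrix}$ on $[L^2(\R)]^4$, so that $T^4=\operatorname{diag}(S^2,S^{*2})$ has trivial domain while $D(T^3)\supset D(DCD)\oplus\{0\}\neq\{0\}$.

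Your construction is more economical: it lives on $[L^2(\R)]^3$ rather than $[L^2(\R)]^4$, it needs only the pair $(A,B)$ from Proposition~\ref{AB trivial domain inspired KOSAKI QUES} (with $B$ bounded) rather than the more delicate Lemma~\ref{mmm}, and the adjoint identification is straightforward precisely because $D(T)$ is a product and one component ($B$) is everywhere defined. The paper's nesting, on the other hand, is what drives its later $2^n$ result; your cyclic-shift pattern suggests an obvious $n\times n$ analogue that could in fact handle \emph{every} $n$ in Conjecture~\ref{Conjecture main}, not just powers of two, which would be worth pursuing.
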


\begin{rema}
Obviously, $D(T^3)\neq\{0\}$ will insure that $D(T^2)\neq\{0\}$. The
same remark applies to $D(T^{*3})\neq\{0\}$ and
$D(T^{*2})\neq\{0\}$.
\end{rema}

The counterexample is based on the following recently obtained
result:

\begin{lem}\label{mmm}(\cite{Dehimi-Mortad-Chernoff}) There are unbounded self-adjoint operators $A$ and $B$
such that
\[D(A^{-1}B)=D(BA^{-1})=\{0\}\]
(where $A^{-1}$ and $B^{-1}$ are not bounded).
\end{lem}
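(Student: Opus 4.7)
The plan is to first reduce the stated triviality of $D(A^{-1}B)$ and $D(BA^{-1})$ to a pair of Kosaki-type disjointness conditions, and then to construct self-adjoint operators satisfying both. Since $A$ and $B$ are self-adjoint and injective (as their inverses exist), $A^{-1}$ is a densely defined self-adjoint operator with $D(A^{-1})=\ran A$, and similarly for $B$. Unpacking the definition of the product domains yields
\[D(BA^{-1})=A\bigl(D(A)\cap D(B)\bigr),\qquad D(A^{-1}B)=B^{-1}\bigl(\ran A\cap \ran B\bigr),\]
so injectivity of $A$ and $B$ converts the conditions $D(BA^{-1})=D(A^{-1}B)=\{0\}$ into
\[D(A)\cap D(B)=\{0\}\quad\text{and}\quad D(A^{-1})\cap D(B^{-1})=\{0\}.\]
Thus the task becomes: produce two self-adjoint operators, both unbounded and with unbounded inverses, such that Kosaki's disjointness holds simultaneously at the level of domains and at the level of ranges.

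The natural starting point is the example from Proposition~\ref{AB trivial domain inspired KOSAKI QUES}: on $L^2(\R)$ set $A_0=M_{e^{x^2/2}}$ and $B_0=\mathcal{F}^*A_0\mathcal{F}$, so that $D(A_0)\cap D(B_0)=\{0\}$ by Hardy's uncertainty principle. Because $A_0,B_0\geq 1$ have bounded inverses, this must be modified. I would replace the Kosaki symbol by a real measurable $a:\R\to\R$ that grows like $e^{x^2/2}$ on a region $E_\infty$ and decays like $e^{-x^2/2}$ on a disjoint region $E_0$, and then set $A:=M_a$ and $B:=\mathcal{F}^*A\mathcal{F}$. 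These are self-adjoint, with $A$ unbounded (because $|a|\to\infty$ on $E_\infty$) and $A^{-1}=M_{1/a}$ unbounded (because $1/|a|\to\infty$ on $E_0$), and analogously for $B$ and $B^{-1}$. The two disjointness conditions are then established by twin applications of Hardy's uncertainty principle: once to any $f$ with $af,\,a\hat f\in L^2$, exploiting the fast growth of $|a|$ on $E_\infty$, and once to any $f$ with $f/a,\,\hat f/a\in L^2$, exploiting the fast growth of $1/|a|$ on $E_0$.

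The main obstacle is the unavoidable conflict between the two growth requirements: since $a\cdot(1/a)\equiv 1$, the sets $E_\infty$ and $E_0$ must be disjoint, so each Hardy-type argument has to be run with exponential growth asserted only on a proper portion of $\R$. Selecting $a$ so that $E_\infty$ and $E_0$ are each large enough (in the measure-theoretic sense required by the uncertainty principle) to force the triviality of the respective intersections, while their union essentially exhausts $\R$, is the delicate technical heart of the construction. Failing a direct half-line refinement of Hardy's theorem, one may alternatively circumvent the obstruction by a block-matrix construction on $L^2(\R)\oplus L^2(\R)$ in the spirit of the preceding propositions, entangling a Kosaki pair $(A_0,B_0)$ with an inverted pair $(B_0^{-1},A_0^{-1})$ in such a way that both disjointness properties are inherited from the original example.
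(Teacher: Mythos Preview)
The paper does not actually prove this lemma: it is imported wholesale from \cite{Dehimi-Mortad-Chernoff} and used as a black box in the proof of Proposition~\ref{nnnnnnnn}. So there is no in-paper argument to compare your proposal against.

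That said, your proposal has a genuine gap. Your reduction is correct and useful: for injective self-adjoint $A,B$ one indeed has $D(BA^{-1})=\{0\}\iff D(A)\cap D(B)=\{0\}$ and $D(A^{-1}B)=\{0\}\iff \ran A\cap\ran B=\{0\}$. The difficulty is that neither of your two proposed constructions is carried through. For the Hardy-type approach, the obstacle you name is real and not merely technical: if $|a|$ is large only on $E_\infty$ and bounded on its complement, then membership in $D(M_a)$ imposes no decay on $f$ outside $E_\infty$, and the standard Hardy uncertainty principle does not force $f=0$ from one-sided Gaussian control of $f$ and $\hat f$. Functions can evade the constraint by living where $|a|$ (respectively $1/|a|$) is tame, so you would need a genuinely new uncertainty-type lemma, not just a careful choice of $E_\infty,E_0$. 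For the block-matrix fallback, the phrase ``entangling a Kosaki pair $(A_0,B_0)$ with an inverted pair $(B_0^{-1},A_0^{-1})$'' is too vague to assess, and the natural first guesses fail: e.g.\ with diagonal sums such as $A=A_0\oplus A_0^{-1}$, $B=B_0\oplus B_0^{-1}$ (or the crossed variant $A=A_0\oplus B_0^{-1}$, $B=B_0\oplus A_0^{-1}$) one gets $D(A)\cap D(B)\supset\{0\}\oplus L^2$, since the second summands are bounded. An off-diagonal self-adjoint ansatz does not obviously fare better. In short, the reduction is right, but you have not yet produced operators meeting both disjointness conditions; for a complete argument you will need the actual construction from \cite{Dehimi-Mortad-Chernoff} or a different idea.
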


Now, we give the proof of Proposition \ref{nnnnnnnn}:

\begin{proof}Let $A$ and $B$ be two unbounded self-adjoint operators
such that
\[D(A^{-1}B)=D(BA^{-1})=\{0\}\]
where $A^{-1}$ and $B^{-1}$ are not bounded. Now, define
\[S=\left(
      \begin{array}{cc}
        0 & A^{-1} \\
        B & 0 \\
      \end{array}
    \right)
\]
on $D(S):=D(B)\oplus D(A^{-1})\subset L^2(\R)\oplus L^2(\R)$. Then
$S$ is densely defined and closed. In addition, we already know from
Lemma \ref{mmm} that $D(A^{-1}B)=D(BA^{-1})=\{0\}$ and so
$D(S^2)=D(S^{*2})=\{0\}$ (as in \cite{Dehimi-Mortad-Chernoff} say).
Notice now that we may write
\[S=\underbrace{\left(
      \begin{array}{cc}
         A^{-1}&0 \\
        0 & B \\
      \end{array}
    \right)}_{C}\underbrace{\left(
      \begin{array}{cc}
        0 & I \\
        I & 0 \\
      \end{array}
    \right)}_{D}\text{ and }S^*=DC\]
because $C$ and $D$ are self-adjoint and $D^{-1}\in B(H)$ ($D$ is
even a fundamental symmetry). Now, define $T$ on $L^2(\R)\oplus
L^2(\R)\oplus L^2(\R)\oplus L^2(\R)$ by
\[T=\left(
      \begin{array}{cc}
        \mathbf{0} & C \\
        D & \mathbf{0} \\
      \end{array}
    \right)
\]
where $\mathbf{0}$ is the zero matrix of operators on $L^2(\R)\oplus
L^2(\R)$. Then,
\[T^2=\left(
      \begin{array}{cc}
        CD & \mathbf{0} \\
        \mathbf{0} & DC \\
      \end{array}
    \right),~T^3=\left(
      \begin{array}{cc}
        \mathbf{0} & CDC \\
        DCD & \mathbf{0} \\
      \end{array}
    \right)\]\text{ and }
    \[T^4=\left(
      \begin{array}{cc}
        CDCD & \mathbf{0}\\
        \mathbf{0}& DCDC \\
      \end{array}
    \right)=\left(
      \begin{array}{cc}
        S^2 & \mathbf{0}\\
        \mathbf{0}& S^{*2} \\
      \end{array}
    \right).\]
Also, since $T^*=\left(
      \begin{array}{cc}
        \mathbf{0} & D \\
        C & \mathbf{0} \\
      \end{array}
    \right)$, we equally have
\[T^{*2}=\left(
      \begin{array}{cc}
        DC & \mathbf{0} \\
        \mathbf{0} & CD \\
      \end{array}
    \right),~T^{*3}=\left(
      \begin{array}{cc}
        \mathbf{0} & DCD \\
        CDC & \mathbf{0} \\
      \end{array}
    \right)\]\text{ and }
    \[T^{*4}=\left(
      \begin{array}{cc}
        DCDC & \mathbf{0}\\
        \mathbf{0}& CDCD \\
      \end{array}
    \right)=\left(
      \begin{array}{cc}
        S^{*2} & \mathbf{0}\\
        \mathbf{0}& S^{2} \\
      \end{array}
    \right).\]

Finally, observe that
\[D(T^2)=D(S)\oplus D(S^*)=D(B)\oplus D(A^{-1})\oplus D(A^{-1})\oplus D(B)\neq \{0_{[L^2(\R)]^4}\},\]
that
\[D(T^3)=D(B)\oplus D(A^{-1})\oplus \{0\}\oplus\{0\}\neq \{0_{[L^2(\R)]^4}\}\]
but
\[D(T^4)=D(S^2)\oplus D(S^{*2})=\{0_{[L^2(\R)]^4}\}.\]
The corresponding relations about the domains of adjoints may be
checked similarly. The proof is therefore complete.
\end{proof}

The same idea of proof may be carried over to higher powers,
however, we have not been able yet to establish a general
counterexample. We give further counterexamples which may inspire
readers to find the coveted general counterexample. For example, we
know how to deal with the case $n=6$.

\begin{pro}There exists a densely defined and closed $T$ such that
$D(T^5)\neq\{0\}$ and $D(T^{*5})\neq\{0\}$ whilst
\[D(T^6)=D(T^{*6})=\{0\}.\]

\end{pro}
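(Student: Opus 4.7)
The plan is to bootstrap the argument of Proposition \ref{nnnnnnnn} by feeding its doubling construction with the operator $S$ from the $n=3$ proposition (whose cube has trivial domain) in place of the operator from Lemma \ref{mmm} (whose square had trivial domain). Block-doubling will then convert ``trivial at the third power'' into ``trivial at the sixth power'', leaving exactly one extra step during which $D(T^{5})$ and $D(T^{*5})$ remain non-trivial.

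Concretely, I would take $A$ (unbounded positive self-adjoint) and $B$ (bounded self-adjoint) from Proposition \ref{AB trivial domain inspired KOSAKI QUES}, so that $D(AB)=\{0\}$ and $D(BA)\neq\{0\}$, and recall from the $n=3$ proof that
\[
S:=\begin{pmatrix}0&A\\B&0\end{pmatrix}
\]
on $L^{2}(\R)\oplus L^{2}(\R)$ is closed and densely defined, has $D(S^{2})\neq\{0\}$ and $D(S^{*2})\neq\{0\}$, and satisfies $D(S^{3})=D(S^{*3})=\{0\}$. The key algebraic observation is the factorisation $S=CD$ and $S^{*}=DC$, with
\[
C:=\begin{pmatrix}A&0\\0&B\end{pmatrix},\qquad D:=\begin{pmatrix}0&I\\I&0\end{pmatrix},
\]
where $C$ is self-adjoint on $L^{2}(\R)\oplus L^{2}(\R)$ and $D$ is a bounded fundamental symmetry; both factorisations hold on the nose (including domains) because $D$ is everywhere defined and bounded. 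On $[L^{2}(\R)]^{4}$ I then set
\[
T:=\begin{pmatrix}\mathbf{0}&C\\D&\mathbf{0}\end{pmatrix},
\]
exactly in the spirit of Proposition \ref{nnnnnnnn}; the operator $T$ is closed and densely defined because $C$ is closed and $D$ is bounded.

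The very same block manipulation carried out in Proposition \ref{nnnnnnnn} then yields $T^{2k}=\mathrm{diag}(S^{k},S^{*k})$ and $T^{*2k}=\mathrm{diag}(S^{*k},S^{k})$ for $k=1,2,3$. Specialising to $k=3$ and invoking $D(S^{3})=D(S^{*3})=\{0\}$ immediately gives $D(T^{6})=D(T^{*6})=\{0\}$. For the non-triviality at the fifth power, I would write $T^{5}=T\cdot T^{4}$ and observe that any $(u,0)$ with $u\in D(S^{2})\setminus\{0\}$ lies in $D(T^{5})$: indeed $(u,0)\in D(T^{4})=D(S^{2})\oplus D(S^{*2})$, and $T^{4}(u,0)=(S^{2}u,0)$ belongs to $D(T)$ since $D$ is everywhere defined and $0\in D(C)$. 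The analogous reasoning applied to $T^{*5}=T^{*}\cdot T^{*4}$ delivers a non-zero element of $D(T^{*5})$. The step that I expect to require most care is the honest verification that the formal block identities for $T^{k}$ really match the alternating compositions $CDCD\cdots$, and hence the powers of $S$ and $S^{*}$, \emph{at the level of domains}; once these identifications are secured, the triviality and non-triviality statements read off directly from the properties of $S$ already established in the $n=3$ proposition.
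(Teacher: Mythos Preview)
Your proposal is correct and follows essentially the same construction as the paper: both form $T=\begin{pmatrix}\mathbf{0}&C\\D&\mathbf{0}\end{pmatrix}$ on $[L^{2}(\R)]^{4}$ with $C$ the diagonal block built from $A,B$ and $D$ the bounded flip, so that $T^{6}=\mathrm{diag}(S^{3},S^{*3})$ has trivial domain by the $n=3$ proposition. The only cosmetic differences are that the paper orders the diagonal as $C=\mathrm{diag}(B,A)$ (so its $S$ is your $S^{*}$), and for $D(T^{5})\neq\{0\}$ the paper computes the off-diagonal entries $CDCDC$ and $DCDCD$ of $T^{5}$ directly and checks $D(DCDCD)\neq\{0\}$, whereas you reach the same conclusion by the equivalent one-step argument $T^{5}=T\cdot T^{4}$ applied to $(u,0)$ with $u\in D(S^{2})\setminus\{0\}$.
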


\begin{proof}We shall avoid unnecessary details as similar cases have
already been treated. First, choose $A$ and $B$ as in Proposition
              \ref{AB trivial domain inspired KOSAKI QUES}, that is, $D(AB)=\{0\}$ and $D(BA)=D(A)$ where $A$ and $B$ are self-adjoint operators and $B$ is bounded and everywhere defined.

Now, let
\[C=\left(
      \begin{array}{cc}
        B & 0 \\
        0 & A \\
      \end{array}
    \right)\text{ and }D=\left(
                           \begin{array}{cc}
                             0 & I \\
                             I & 0 \\
                           \end{array}
                         \right).
\]
Then $C$ and $D$ are self-adjoint (remember that $D$ is even
everywhere defined and bounded). Set $S=CD=\left(
                \begin{array}{cc}
                  0 & B \\
                  A & 0 \\
                \end{array}
              \right)$. Hence $S^*=\left(
                \begin{array}{cc}
                  0 & A \\
                  B & 0 \\
                \end{array}
              \right)$. Finally,
              define $T$ on $[L^2(\R)]^4$ by
              \[T=\left(
                    \begin{array}{cc}
                      \mathbf{0} & C \\
                      D & \mathbf{0} \\
                    \end{array}
                  \right) \text{ and so } T^*=\left(
                    \begin{array}{cc}
                      \mathbf{0} & D \\
                      C & \mathbf{0} \\
                    \end{array}
                  \right)
              \]
with $\mathbf{0}$ being the zero matrix of operators on
$L^2(\R)\oplus L^2(\R)$. Hence
\[T^6=\left(
        \begin{array}{cc}
          S^3 & \mathbf{0} \\
          \mathbf{0} & S^{*3} \\
        \end{array}
      \right) \text{ and }T^{*6}=\left(
        \begin{array}{cc}
          S^{*3} & \mathbf{0} \\
          \mathbf{0} & S^{3} \\
        \end{array}
      \right).
\]

Accordingly, $D(T^6)=D(S^3)\oplus D(S^{*3})=\{(0,0)\}=D(T^{*6})$
thanks to the assumptions on $A$ and $B$. However, \[T^5=\left(
                    \begin{array}{cc}
                      \mathbf{0} & CDCDC \\
                      DCDCD & \mathbf{0} \\
                    \end{array}
                  \right),~T^{*5}=\left(
                    \begin{array}{cc}
                      \mathbf{0} & DCDCD \\
                      CDCDC & \mathbf{0} \\
                    \end{array}
                  \right)\] and, as can simply be checked, \[D(CDCDC)=\{0\}
                  \text{ but }D(DCDCD)\neq\{0\}.\] Consequently,
                  $D(T^5)\neq\{0\}$. A similar reasoning yields
                  $D(T^{*5})\neq\{0\}$, marking the end of the
                  proof.
\end{proof}

The next counterexamples settles the case of powers of the type
$2^n$ via what we may call "nested matrices".

\begin{pro}
For each $n\in\N$, there is a densely defined and closed operator
$T$ (which is an off-diagonal matrix of operators) such that
$D(T^{2^n-1})\neq\{0\}$ and $D(T^{*{2^n-1}})\neq\{0\}$ whereas
\[D(T^{2^n})=D(T^{*^{2^n}})=\{0\}.\]
\end{pro}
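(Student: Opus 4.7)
The strategy is induction on $n$, building on the ``nested matrices'' idea of Proposition \ref{nnnnnnnn}. I will carry along a slightly stronger inductive hypothesis: at each level $n \geq 1$, a densely defined closed operator $S_n$ on a Hilbert space $H_n$ that both (i) satisfies $D(S_n^{2^n}) = D(S_n^{*2^n}) = \{0\}$ while $D(S_n^{2^n-1}) \neq \{0\}$ and $D(S_n^{*(2^n-1)}) \neq \{0\}$, and (ii) factors as $S_n = C_n D_n$, where $C_n$ is self-adjoint on $H_n$ and $D_n$ is a fundamental symmetry (hence bounded, everywhere defined, with bounded inverse), so that automatically $S_n^* = D_n C_n$. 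Clause (ii) is what makes the induction propagate; the proposition then follows by setting $T := S_n$.

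The base case $n = 1$ is already provided by the proof of Proposition \ref{nnnnnnnn}: the operator $S = \left(\begin{array}{cc} 0 & A^{-1} \\ B & 0 \end{array}\right)$ supplied by Lemma \ref{mmm} factors as $C_1 D_1$ with $C_1 := \mathrm{diag}(A^{-1},B)$ and $D_1 := \left(\begin{array}{cc} 0 & I \\ I & 0 \end{array}\right)$, and both (i) and (ii) are verified there for exponent $2 = 2^1$.

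For the inductive step, given $S_n = C_n D_n$ on $H_n$, I set $H_{n+1} := H_n \oplus H_n$ and
\[
S_{n+1} := \left(\begin{array}{cc} \mathbf{0} & C_n \\ D_n & \mathbf{0} \end{array}\right),\qquad C_{n+1} := \left(\begin{array}{cc} C_n & \mathbf{0} \\ \mathbf{0} & D_n \end{array}\right),\qquad D_{n+1} := \left(\begin{array}{cc} \mathbf{0} & I \\ I & \mathbf{0} \end{array}\right).
\]
Then $S_{n+1} = C_{n+1} D_{n+1}$, with $C_{n+1}$ self-adjoint (diagonal sum of self-adjoint operators) and $D_{n+1}$ a fundamental symmetry on $H_{n+1}$; so clause (ii) is preserved, and $S_{n+1}$ is densely defined and closed by standard block-operator results (\cite{tretetr-book-BLOCK}). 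A short induction on $k$ then yields the formulas
\[
S_{n+1}^{2k} = \left(\begin{array}{cc} S_n^k & \mathbf{0} \\ \mathbf{0} & S_n^{*k} \end{array}\right),\qquad S_{n+1}^{2k+1} = \left(\begin{array}{cc} \mathbf{0} & C_n S_n^{*k} \\ D_n S_n^k & \mathbf{0} \end{array}\right),
\]
understood as equalities of unbounded operators on their natural domains. Plugging $k = 2^n$ into the first formula and invoking (i) gives $D(S_{n+1}^{2^{n+1}}) = D(S_n^{2^n}) \oplus D(S_n^{*2^n}) = \{0\}$; plugging $k = 2^n - 1$ into the second and using that $D_n$ is everywhere defined on $H_n$ gives $D(S_{n+1}^{2^{n+1}-1}) \supseteq D(S_n^{2^n-1}) \oplus \{0\} \neq \{0\}$. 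The adjoint statements are symmetric, via $S_{n+1}^* = D_{n+1} C_{n+1}$ and the analogous block computation.

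The main obstacle is essentially bookkeeping: one has to check that the two displayed power formulas really hold as equalities of (unbounded) operators, and not merely as inclusions. This is where the fact that each $D_n$ is a fundamental symmetry (bounded, everywhere defined, with bounded inverse) becomes crucial, since it prevents the usual shrinkage of domains under successive compositions with unbounded factors -- precisely the structural feature that clause (ii) of the inductive hypothesis was designed to maintain.
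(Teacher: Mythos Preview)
Your proposal is correct and follows essentially the same ``nested matrices'' induction as the paper: factor the level-$n$ operator as $C_nD_n$ with $D_n$ a fundamental symmetry, build the level-$(n+1)$ operator as the off-diagonal block $\left(\begin{smallmatrix}0 & C_n\\ D_n & 0\end{smallmatrix}\right)$, and read off the required domain equalities from the resulting block formulas for even and odd powers. The only differences are cosmetic: you start the induction at $n=1$ (using the operator $S$ from the proof of Proposition~\ref{nnnnnnnn}) rather than at $n=2$, and you make the factorization $S_n=C_nD_n$ with $C_n$ self-adjoint an explicit clause of the inductive hypothesis, which the paper uses but leaves implicit.
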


\begin{proof}We use a proof by induction. The statement is true for
$n=2$ as seen before. Assume now that there is a closed $T=\left(
                                                             \begin{array}{cc}
                                                               0 & A \\
                                                               B & 0 \\
                                                             \end{array}
                                                           \right)$
                                                           such that
                                                           $D(T^{2^n-1})\neq\{0\}$
and $D(T^{*{2^n-1}})\neq\{0\}$ with
$D(T^{2^n})=D(T^{*{2^n}})=\{0\}$. Now, write
\[T=\left(
                                                             \begin{array}{cc}
                                                               0 & A \\
                                                               B & 0 \\
                                                             \end{array}
                                                           \right)=\underbrace{\left(
                                                             \begin{array}{cc}
                                                               A & 0 \\
                                                               0 & B \\
                                                             \end{array}
                                                           \right)}_{C}\underbrace{\left(
                                                             \begin{array}{cc}
                                                               0 & I \\
                                                               I & 0 \\
                                                             \end{array}
                                                           \right)}_{\tilde{B}}\]
Next, set $S=\left(
               \begin{array}{cc}
                 0 & C \\
                 \tilde{B} & 0 \\
               \end{array}
             \right)
$ and so
\[S^{2^{n+1}}=\left(
                \begin{array}{cc}
                  (C\tilde{B})^{2^n} & 0 \\
                  0 & (\tilde{B}C)^{2^n} \\
                \end{array}
              \right)=\left(
                        \begin{array}{cc}
                          T^{2^n} & 0 \\
                          0 & T^{*^{2^n}} \\
                        \end{array}
                      \right)
\]
and so $D(S^{2^{n+1}})=\{0\}$. On the other hand,
\[S^{2^{n+1}-1}=\left(
                  \begin{array}{cc}
                    0 & (C\tilde{B})^{2^n-1}C \\
                    \tilde{B}(C\tilde{B})^{2^n-1} & 0 \\
                  \end{array}
                \right).
\]
Since $\tilde{B}$ is everywhere bounded, it follows that
\[D(\tilde{B}(C\tilde{B})^{2^n-1})=D((C\tilde{B})^{2^n-1})=D(T^{2^n-1})\neq\{0\}\]
which leads to $D(S^{2^{n+1}-1})\neq\{0\}$, as wished.

The case of adjoints may be treated similarly and hence we omit it.
\end{proof}

\section{Conclusion}

Even though we have managed to obtain a fair amount of
counterexamples, we have not been able to solve the whole
conjecture. If the conjecture is false for some $n$, then proving
this is not going to be an easy task. So, we invite interested
readers to try to contribute towards a complete answer to the main
problem.

I also take this opportunity to thank Dr S. Dehimi with whom a
fruitful discussion has led to the proof of Proposition \ref{AB
trivial domain inspired KOSAKI QUES}.


\begin{thebibliography}{1}

\bibitem{Arlinski-Zagrebnov}
Y. Arlinski\u{i}, V. A. Zagrebnov, Around the van
Daele-Schm\"{u}dgen Theorem, \textit{Integral Equations Operator
Theory}, \textbf{81/1} (2015) 53-95.

\bibitem{Brasche-Neidhardt}
J. F. Brasche, H. Neidhardt, Has Every Symmetric Operator a Closed
Symmetric Restriction Whose Square Has a Trivial Domain?,
\textit{Acta Sci. Math. (Szeged)}, \textbf{58/1-4} (1993) 425-430.

\bibitem{CH}
P. R.~Chernoff, \textnormal{A Semibounded Closed Symmetric Operator
Whose Square Has Trivial Domain,} \textit{Proc. Amer. Math. Soc., }
{\bf 89/2}\textnormal{ (1983) 289-290}.

\bibitem{Dehimi-Mortad-Chernoff}
S. Dehimi, M. H. Mortad, Chernoff Like Counterexamples Related to
Unbounded Operators, (submitted). arXiv:1808.09523v2.

\bibitem{JIN-CHEN-MATrix-UNBOUNDED-ARXIV}
G. Jin, A. Chen, Some basic properties of block operator matrices.
arXiv:1403.7732

\bibitem{Jorgensen-Tian-Book-non-comm-analysis}
P. Jorgensen, F. Tian, \textit{Non-commutative analysis}. With a
foreword by Wayne Polyzou. World Scientific Publishing Co. Pte.
Ltd., Hackensack, NJ, 2017.

\bibitem{KOS}
H. Kosaki, On intersections of domains of unbounded positive
operators, \textit{Kyushu J. Math.}, {\bf 60/1} (2006) 3-25.

\bibitem{Moller-Szafanriac-matri-unbounded}
M. M\"{o}ller, F. H. Szafraniec, Adjoints and Formal Adjoints of
Matrices of Unbounded Operators, \textit{Proc. Amer. Math. Soc.},
\textbf{136/6} (2008) 2165-2176.

\bibitem{Mortad-commutators-unboudned-CEXAMP}
M. H. Mortad, Counterexamples Related to Commutators of Unbounded
Operators.

\bibitem{NAI}
M.~Naimark, On the Square of a Closed Symmetric Operator,
\textit{Dokl. Akad. Nauk SSSR,} {\bf 26} \textnormal{ (1940)
866-870; ibid. {\bf 28} (1940), 207-208}.

\bibitem{Ota-Schmudgen-2003-Matrices-Operators}
S. \^{O}ta, K. Schm\"{u}dgen, Some selfadjoint $2\times2$ operator
matrices associated with closed operators, Integral Equations
Operator Theory, \textbf{45/4} (2003) 475-484.

\bibitem{SCHMUDG-1983-An-trivial-domain}
K. Schm\"{u}dgen, On Domains of Powers of Closed Symmetric
Operators, \textit{J. Operator Theory}, {\bf 9/1} (1983) 53-75.

\bibitem{SCHMUDG-book-2012}
K. Schm\"{u}dgen, \textit{Unbounded Self-adjoint Operators on
Hilbert Space}, Springer. GTM {\bf 265}  (2012).

\bibitem{tretetr-book-BLOCK}
Ch. Tretter, \textit{Spectral Theory of Block Operator Matrices and
Applications}. Imperial College Press, London, 2008.

\bibitem{Weidmann} J. Weidmann, \textit{Linear Operators in Hilbert Spaces},
Springer, 1980.

\end{thebibliography}
\end{document}